\documentclass{amsart}
\usepackage{amsmath, amssymb, dsfont, amsaddr}
\usepackage{xcolor}

\newtheorem{Proposition}{Proposition}
\newtheorem{Theorem}{Theorem}

\newtheorem{Lemma}{Lemma}
\newtheorem{Remark}{Remark}
\theoremstyle{definition}
\newtheorem{Definition}{Definition}
\newtheorem{Question}{Question}

\title[Coarse embeddings of quotients by finite group actions]{Coarse embeddings of quotients by finite group actions}
\author{Thomas Weighill}
\address{Department of Mathematics and Statistics, University of North Carolina at Greensboro}
\subjclass{Primary 51F30. Secondary 49Q22. }

\begin{document}

\maketitle

\begin{abstract}
    We prove that for a metric space $X$ and a finite group $G$ acting on $X$ by isometries, if $X$ coarsely embeds into a Hilbert space, then so does the quotient $X/G$. A crucial step towards our main result is to show that for any integer $k > 0$ the space of unordered $k$-tuples of points in Hilbert space, with the $1$-Wasserstein distance, itself coarsely embeds into Hilbert space. Our proof relies on establishing bounds on the sliced Wasserstein distance between empirical measures in $\mathbb{R}^n$.
\end{abstract}

\section{Introduction}

Coarse embeddings of metric spaces into Hilbert space are important in a range of theoretical and applied areas of mathematics. By a result of Yu~\cite{yu2000coarse}, if a discrete bounded geometry metric space $X$ admits a coarse embedding into a Hilbert space, then the coarse Baum-Connes Conjecture holds for $X$. A consequence of this is that if a finitely generated group $G$ with the word-metric coarsely embeds into Hilbert space, then the Novikov Conjecture holds for $G$. Properties which imply the existence of coarse embeddings into Hilbert space include finite asymptotic dimension, introduced by Gromov~\cite{gromov20041asymptotic}, and Yu's Property A~\cite{yu2000coarse}.

On the more applied side, many classical machine learning algorithms operate on the assumption that the input data are vectors in a (real) Hilbert space. To analyze more complicated data types, it is therefore necessary to map the data into a Hilbert space. A coarse embedding is among the weakest types of map one can consider which nonetheless provides some control on distances. A good example of a data type which is hard to embed into Hilbert space is given by persistence diagrams, which arise in topological data analysis. Recent results have shown that for most distance metrics, a coarse embedding of all persistence diagrams into Hilbert space is impossible~\cite{bubenik2020embeddings, wagner2021nonembeddability}. On the other hand, the space of diagrams with $n$ or less points for fixed $n$ was shown to have finite asymptotic dimension in \cite{mitra2021space}, and therefore coarse embeddings from this space into Hilbert space do exist. Invariant machine learning is a fast growing field which includes the problem of embedding orbit spaces $X/G$ into Euclidean space (see e.g.~\cite{cahill2024group} and the references there). For example, $X$ may be a set of shapes, and $G$ a set of rigid transformations which should not affect the output of a well-designed classifier.

In this paper we prove the following main result.

\begin{Theorem}\label{thm:main2}
    Let $X$ be a metric space which coarsely embeds into Hilbert space and let $X/G$ be the quotient of $X$ by the action of a finite group $G$ acting by isometries. Then $X/G$ coarsely embeds into a Hilbert space.
\end{Theorem}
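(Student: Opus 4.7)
The plan is to reduce the theorem to the technical statement announced in the abstract: for any positive integer $k$, the space $\mathsf{SP}^k H$ of unordered $k$-tuples in a Hilbert space $H$, equipped with the $1$-Wasserstein distance $W_1$, coarsely embeds into Hilbert space. Granting that, the rest of the argument is essentially a symmetrization of a given coarse embedding of $X$.

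Fix a coarse embedding $f : X \to H$ with control functions $\rho_-, \rho_+ : [0,\infty) \to [0,\infty)$ satisfying
\[ \rho_-(d_X(x,y)) \leq \|f(x) - f(y)\|_H \leq \rho_+(d_X(x,y)), \]
with $\rho_\pm$ non-decreasing and $\rho_-(t) \to \infty$. Setting $k = |G|$, define
\[ \bar{f} : X/G \to \mathsf{SP}^{k} H, \qquad [x] \mapsto \{\,f(gx) : g \in G\,\}, \]
where the right-hand side is read as a multiset of cardinality $k$ (with repetitions at points with nontrivial stabilizer). Since permuting representatives of $[x]$ merely permutes the multiset, $\bar{f}$ is well-defined.

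The main step is to verify that $\bar{f}$ is itself a coarse embedding with respect to $W_1$. Because $G$ is finite and acts by isometries, the quotient distance simplifies to $d_{X/G}([x],[y]) = \min_{g \in G} d_X(x, gy)$. For the upper bound, pick $g_0$ realizing this minimum and use the bijection $g \mapsto g g_0^{-1}$ as a matching between $\{f(gx)\}_{g \in G}$ and $\{f(g g_0 y)\}_{g \in G}$; each matched pair satisfies $\|f(gx) - f(g g_0 y)\|_H \leq \rho_+(d_X(x, g_0 y)) = \rho_+(d_{X/G}([x],[y]))$, and so does the average, bounding $W_1(\bar{f}([x]), \bar{f}([y]))$ from above. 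For the lower bound, given \emph{any} bijection $\sigma : G \to G$ and any $g$, both $gx$ and $\sigma(g) y$ are valid representatives of $[x]$ and $[y]$, so $d_X(gx, \sigma(g)y) \geq d_{X/G}([x],[y])$; applying $\rho_-$ term by term and then averaging and minimizing over $\sigma$ yields $W_1 \geq \rho_-(d_{X/G}([x],[y]))$.

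Composing $\bar{f}$ with the coarse embedding $\mathsf{SP}^k H \to \ell_2$ produces the desired coarse embedding of $X/G$ into Hilbert space. The main obstacle in this program is therefore not the reduction itself --- which is essentially formal --- but the standalone technical result underwriting it, namely the coarse embeddability of $\mathsf{SP}^k$ via sliced Wasserstein estimates. A minor side issue is that the paper's notation $\mathsf{SP}^k \mathbb{R}^n$ is explicitly finite-dimensional; one should verify that the embedding constants extend uniformly to arbitrary Hilbert $H$, which should follow either by direct construction or by observing that every $k$-tuple lies in a subspace of dimension at most $k$.
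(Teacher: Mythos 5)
Your proposal is correct and follows essentially the same route as the paper: the paper sends $[x]$ to the empirical measure $\frac{1}{k}\sum_{g\in G}\delta_{g\cdot x}$, observes that this gives an isometric embedding $X/G \to \mathcal{E}^k X$, and then invokes the coarse embeddability of $\mathcal{E}^k H$ (your $\mathsf{SP}^k H$) --- exactly the technical result you grant --- so your map $\bar f$ is just this isometry composed with the induced map $\mathcal{E}^k f$ of Proposition \ref{prop:Eembed}. The only caveat concerns your side remark on passing from $\mathbb{R}^n$ to infinite-dimensional $H$: the paper handles this via Nowak's finite-subset criterion (Theorem \ref{thm:nowak}) together with the fact that the constants in Theorem \ref{thm:rn} are independent of $n$, rather than by the per-tuple dimension count you suggest, which by itself does not yield a single global embedding.
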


Theorem \ref{thm:main2} is a natural addition to existing results for other coarse geometric properties. In particular, the following properties (all of which imply the existence of a coarse embedding into Hilbert space) are known to be preserved under quotients by finite group actions:
\begin{itemize}
    \item asymptotic dimension at most $n$ \cite{kasprowski2017asymptotic} (for proper metric spaces)
    \item finite asymptotic dimension \cite{dydak2016preserving}
    \item Asymptotic Property C \cite{dydak2016preserving} (which was defined in \cite{dranishnikov2000asymptotic})
    \item Yu's Property A~\cite{dydak2016preserving} (for bounded geometry metric spaces)
\end{itemize}

Note that by Theorem \ref{thm:main2}, if $X$ embeds into Hilbert space, so does $X^k/S_k$ for any $k \in \mathbb{N}$, where the symmetric group $S_k$ acts on $X^k$ by permuting coordinates, and the metric on $X^k$ is an $\ell^p$ metric. The quotient space $X^k/S_k$, sometimes called the symmetric product, can be viewed as the space of unordered $k$-tuples of points in $X$. Thus we obtain:

\begin{Theorem}\label{thm:main1}
    Let $X$ be a metric space which coarsely embeds into a Hilbert space and let $\mathcal{E}^k X$ be the space of unordered $k$-tuples of points in $X$. Then $\mathcal{E}^k X$ coarsely embeds into a Hilbert space.
\end{Theorem}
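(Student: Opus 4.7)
The plan is to deduce Theorem \ref{thm:main1} directly from Theorem \ref{thm:main2}. First I would identify $\mathcal{E}^k X$ with the quotient $X^k / S_k$, where the symmetric group $S_k$ acts on the $k$-fold power $X^k$ (equipped with some $\ell^p$ product metric) by permuting the coordinates. Since permuting coordinates preserves any $\ell^p$ norm, this action is by isometries, so the hypotheses of Theorem \ref{thm:main2} apply to $X^k$ and $G = S_k$ provided $X^k$ itself coarsely embeds into Hilbert space.

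The remaining step is therefore to verify that finite powers preserve coarse embeddability into Hilbert space. Given a coarse embedding $f \colon X \to H$ with control functions $\rho_-, \rho_+$, I would define $F \colon X^k \to H^k$ by $F(x_1, \ldots, x_k) = (f(x_1), \ldots, f(x_k))$, where $H^k$ carries the natural (Hilbert) $\ell^2$ product norm. Upper control is straightforward: each coordinate distance is bounded by the $\ell^p$ distance on $X^k$, so $\|F(\x) - F(\y)\|_{\ell^2} \leq k^{1/2} \rho_+(d_{\ell^p}(\x,\y))$. For lower control, I would use that if $d_{\ell^p}(\x,\y) \geq t$ then at least one coordinate satisfies $d(x_i, y_i) \geq t / k^{1/p}$, giving $\|F(\x) - F(\y)\|_{\ell^2} \geq \rho_-(t / k^{1/p})$, which still tends to infinity with $t$.

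Once $X^k$ is known to embed coarsely into the Hilbert space $H^k$, Theorem \ref{thm:main2} applied to $X^k$ together with the isometric $S_k$-action yields that $X^k / S_k = \mathcal{E}^k X$ coarsely embeds into Hilbert space, completing the proof. The only mild subtlety is that the statement of Theorem \ref{thm:main1} does not specify $p$; this is harmless because all $\ell^p$ metrics on a finite product are bi-Lipschitz equivalent, and coarse embeddability is invariant under bi-Lipschitz change of metric.

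Since the real content is carried by Theorem \ref{thm:main2}, I do not expect a serious obstacle here — the deduction is essentially formal, and the only point requiring care is bookkeeping of the control functions $\rho_\pm$ under the finite-product construction.
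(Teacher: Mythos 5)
Your deduction of Theorem \ref{thm:main1} from Theorem \ref{thm:main2} is formally sound as a piece of mathematics: the identification of $\mathcal{E}^k X$ with $X^k/S_k$, the isometric $S_k$-action, and the stability of coarse embeddability into Hilbert space under finite products are all handled correctly. The problem is that the argument is circular relative to the logical structure of the paper. The introduction does sketch exactly this implication, but it immediately adds that Theorem \ref{thm:main1} is in fact proved first and Theorem \ref{thm:main2} is obtained as a consequence. Concretely, the paper's only proof of Theorem \ref{thm:main2} (Section \ref{sec:orbit}) embeds $X/G$ isometrically into $\mathcal{E}^{|G|}X$ and then invokes Theorem \ref{thm:empirical}, which is precisely the statement of Theorem \ref{thm:main1}. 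So you cannot use Theorem \ref{thm:main2} as a black box here without supplying an independent proof of it, and that independent proof is where all the real difficulty lives.

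The content your proposal omits is the entire sliced Wasserstein argument: the two-sided comparison between $SW_1$ and $W_1$ for empirical measures in $\mathbb{R}^n$ with constants independent of $n$ (Theorem \ref{thm:bounds}), Schoenberg's theorem applied to the negative semi-definite kernel $\frac{1}{\kappa(n)}SW_1$ to embed $\mathcal{E}^k\mathbb{R}^n$ into Hilbert space (Theorem \ref{thm:rn}), Nowak's finite-subset criterion to pass from finite-dimensional slices to $\mathcal{E}^k H$ for an arbitrary Hilbert space $H$ (Theorem \ref{thm:hilbert}), and Proposition \ref{prop:Eembed} to reduce a general $X$ to the Hilbert space case. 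None of this is recoverable from the quotient-and-product bookkeeping in your proposal; to make your route non-circular you would have to prove Theorem \ref{thm:main2} for general finite group actions by some other means, which the paper does not do and which appears to be no easier than proving Theorem \ref{thm:main1} directly.
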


In fact, we will prove Theorem \ref{thm:main1} first and obtain Theorem \ref{thm:main2} as a consequence. The proof of Theorem \ref{thm:main1} uses an already existing concept from the area of optimal transport, namely the sliced Wasserstein distance introduced by Rabin et al.~\cite{rabin2012wasserstein}. A version of the sliced Wasserstein distance was already used in \cite{carriere2017sliced} to construct a kernel on persistence diagrams which, on the space of diagrams with at most $n$ points, induces a coarse embedding into Hilbert space. Our result relies on a higher-dimensional version of this idea.

We now outline the paper. In Section \ref{sec:prelim} below, we introduce the necessary definitions and concepts. In Section \ref{sec:bounds} we establish the main upper and lower bounds on sliced Wasserstein distance which will be needed. Section \ref{sec:emp} concerns the proof of Theorem \ref{thm:main1} and thereafter of Theorem \ref{thm:main2}. Some concluding remarks about connections to other embeddings are made in the final section.

\section{Preliminaries}\label{sec:prelim}

For a set $X$, we denote elements of the product $X^k$ in bold face, and denote the $i^{th}$ coordinate of an ordered $k$-tuple $\mathbf{v} \in X^k$ by $\mathbf{v}_i$.  Define $\langle k \rangle :=  \{1,\ldots,k\}$. We recall some basic definitions and results from optimal transport (see e.g.~\cite{peyre2019computational}). 

\begin{Definition}
Let $(X,d)$ be a metric space on which every finite Borel measure is a Radon measure. Given two Borel probability measures $\alpha$ and $\beta$ with finite $p^{\mathrm{th}}$ moments, the \emph{$p$-Wasserstein distance} is defined by
\[
W_p(\alpha, \beta) = \left( \inf_{\gamma \in \mathcal{U}(\alpha,\beta)} \int d(x,y)^p d\gamma(x,y) \right)^{1/p},
\]
where $\mathcal{U}(\alpha,\beta)$ is the set of Borel measures on $X^2$ with marginals $\alpha$ and $\beta$. 
\end{Definition}

If $\alpha = \sum_{i=1}^m \mathbf{a}_i \delta_{x_i}$ and $\beta = \sum_{i=1}^n \mathbf{b}_i \delta_{y_i}$ are discrete measures, then the Wasserstein distance can be equivalently formulated as
\[
W_p(\alpha, \beta) = \left( \min_{\mathbf{P} \in U(\mathbf{a}, \mathbf{b})} \sum_{i,j} d(x_i, y_j)^p \mathbf{P}_{ij} \right)^{1/p},
\]
where $\mathbf{P}$ ranges over the set $U(\mathbf{a},\mathbf{b})$ of $m\times n$ matrices such that $\mathbf{P} \mathds{1} = \mathbf{a}$ and $\mathbf{P}^T \mathds{1} = \mathbf{b}$.  We denote by $W_p(X)$ the \emph{$p$-Wasserstein space over $X$} -- that is, the set of Borel probability measures on $X$ with finite $p^{th}$ moments endowed with the $p$-Wasserstein metric.

An important special case for the present paper is when each measure is a sum of uniformly weighted Dirac measures. By an \emph{empirical measure of size $k$} on a space $X$, we mean a measure of the form $\alpha = \frac{1}{k} \sum_{i=1}^k \delta_{x_i}$, where $x_i \in X$ for all $i$. Birkhoff's theorem implies that for two empirical measures of size $k$, $\alpha = \frac{1}{k} \sum_{i=1}^k \delta_{x_i}$ and $\beta = \frac{1}{k} \sum_{i=1}^k \delta_{y_i}$, the $1$-Wasserstein distance $W_1(\alpha, \beta)$ can be expressed as

$$
\frac{1}{k} \min_{\pi} \sum_{i=1}^k d(x_i, y_{\pi(i)})
$$
where $\pi$ ranges over all bijections $\langle k \rangle \to \langle k \rangle$. 
We will call a $\pi$ realizing this minimum an \emph{optimal matching}. The special case of $X = \mathbb{R}$ will be important later. In this case, an optimal matching is given by pairing the $j^\mathrm{th}$ largest value in $(x_i)_{1\leq i \leq k}$ with the $j^\mathrm{th}$ largest value in $(y_i)_{1\leq i \leq k}$ for all $j$.

For a metric space $X$, we denote the space of empirical measures of size $k$ with the $1$-Wasserstein distance by $\mathcal{E}^k X$. We can also think of $\mathcal{E}^k X$ as the space of unordered $k$-tuples of points in $X$, sometimes called the $k$-fold symmetric product of $X$. See \cite{harms2023geometry, needham2023geometric} for some geometric properties of this space and some applications where it naturally arises.

We now define the sliced Wasserstein distance for measures in $\mathbb{R}^n$. For any $\theta \in \mathbb{R}^n$, denote by $\theta^\ast$ the orthogonal projection onto $\theta$.

\begin{Definition}[\cite{rabin2012wasserstein}]
Given two Borel probability measures $\alpha$ and $\beta$, on $\mathbb{R}^n$ the \emph{sliced $p$-Wasserstein distance} is defined by
\[
SW_p(\alpha, \beta) = \left( \int_{\mathbb{S}^{n-1}} W^p_p(\theta^\ast_{\#} \alpha,\ \theta^\ast_{\#} \beta)  \right)^{\frac{1}{p}}
\]
where $\theta^\ast_{\#}$ denotes the pushforward map induced by $\theta^\ast$.
\end{Definition}

We will also need the definition of a coarse embedding, originally introduced under the name uniform embedding by Gromov~\cite{gromov20041asymptotic}. For a broad introduction to coarse geometry we direct the reader to \cite{roe2003lectures}.

\begin{Definition}\label{def:coarseembedding}
    A map $f: X \to Y$ from a metric space $X$ to a metric space $Y$ is a \emph{coarse embedding} if there are non-decreasing functions $\rho_{-},\rho_{+}: [0, \infty) \to [0, \infty)$, called \emph{control functions}, with $\lim_{t\to\infty} \rho_{-} = \infty$ and 
\begin{align}\label{eq:control}
    \forall_{x,x' \in X} \ \rho_{-}(d(x,x')) \leq d(f(x), f(x')) \leq \rho_{+}(d(x,x')).
\end{align}
\end{Definition}

In particular, a bi-Lipschitz embedding is a coarse embedding where the control functions in (\ref{eq:control}) are linear. Coarse embedding is a much weaker condition that bi-Lipschitz embedding, however, even for geodesic metric spaces.

\section{Bounds on the sliced Wasserstein distance}\label{sec:bounds}

Our goal in this section is to derive bounds on the sliced Wasserstein distance between empirical measures in $\mathbb{R}^n$. Some bounds for general measures were obtained in Chapter 5 of \cite{bonnotte2013unidimensional}; our upper bound is a more explicit version of Proposition 5.1.3 there, but for the lower bound we require a more specialized result for empirical measures. We begin with the lower bound. 

Let $B(a,b) = \int_0^1 t^{a-1} (1-t)^{b-1} dt$ denote the Beta function,  and let $$
I(x;a,b) = \frac{1}{B(a,b)} \int_0^x t^{a-1} (1-t)^{b-1} dt
$$
denote the regularized incomplete Beta function. Let $S_{n-1}$ denote the volume of the unit $(n-1)$-sphere $\mathbb{S}^{n-1} \subseteq \mathbb{R}^{n}$.  For convenience, and since we will only require results for sufficiently large $n$, we will assume throughout that $n \geq 3$ (but see Remark \ref{rem:R2}). 

\begin{Lemma}\label{lemma:hypersph}
Let $\mathbb{S}^{n-1}\subseteq \mathbb{R}^{n}$ be the unit $(n-1)$-sphere and let $\mathbf{x}$ be a random variable uniformly distributed on $\mathbb{S}^{n-1}$. Then the absolute value of the first coordinate $|\mathbf{x}_1|$ has probability density function
$$
f(|\mathbf{x}_1| = t) = \frac{2(1-t^2)^{\frac{n-1}{2}-1}}{B(\frac{n-1}{2}, \frac{1}{2})},
$$
cumulative density function
$$
f(|\mathbf{x}_1| \leq t) = 1- I \left( 1-t^2; \frac{n-1}{2}, \frac{1}{2} \right),
$$
and expectation
$$
\mathbb{E}[|\mathbf{x}_1|] = \frac{2}{(n-1) B(\frac{n-1}{2}, \frac{1}{2})}.
$$
\end{Lemma}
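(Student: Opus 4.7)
The plan is to derive the density of $|\mathbf{x}_1|$ directly from the geometry of $\mathbb{S}^{n-1}$, then get the CDF and expectation by routine integration with well-chosen substitutions.

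First I would compute the density of the signed coordinate $\mathbf{x}_1$ by slicing the sphere at fixed height. The preimage $\{\mathbf{x} \in \mathbb{S}^{n-1} : \mathbf{x}_1 = t\}$ is a round $(n-2)$-sphere of radius $\sqrt{1-t^2}$, contributing $(n-2)$-dimensional volume $S_{n-2}(1-t^2)^{(n-2)/2}$. The Jacobian from slicing with respect to $t$ rather than arclength along a great circle produces an extra $1/\sqrt{1-t^2}$ (this is the point at which care is needed). Dividing by the total surface area $S_{n-1}$ then gives
\[
f_{\mathbf{x}_1}(t) = \frac{S_{n-2}}{S_{n-1}}(1-t^2)^{(n-3)/2}, \qquad t \in [-1,1].
\]
Using the standard identity $S_{n-1} = 2\pi^{n/2}/\Gamma(n/2)$, the ratio simplifies as
\[
\frac{S_{n-1}}{S_{n-2}} = \frac{\sqrt{\pi}\,\Gamma((n-1)/2)}{\Gamma(n/2)} = B\!\left(\tfrac{n-1}{2},\tfrac{1}{2}\right),
\]
recognizing the Beta function via $\Gamma(1/2) = \sqrt{\pi}$. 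Symmetry of $\mathbf{x}_1$ about $0$ doubles the density on $[0,1]$, yielding the claimed formula for $f_{|\mathbf{x}_1|}$ after writing $(n-3)/2 = (n-1)/2 - 1$.

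Next, for the CDF I would integrate $f_{|\mathbf{x}_1|}$ from $0$ to $t$ and use the substitution $u = s^2$, $ds = du/(2\sqrt{u})$, which converts the integrand into $u^{1/2 - 1}(1-u)^{(n-1)/2 - 1}/B((n-1)/2, 1/2)$. This identifies the integral as $I(t^2; 1/2, (n-1)/2)$. The slightly subtle step is invoking the reflection identity $I(x;a,b) = 1 - I(1-x;b,a)$ to rewrite this as $1 - I(1-t^2; (n-1)/2, 1/2)$, matching the statement. For the expectation, the substitution $u = 1-t^2$ makes
\[
\mathbb{E}[|\mathbf{x}_1|] = \int_0^1 \frac{2 t (1-t^2)^{(n-3)/2}}{B((n-1)/2,1/2)}\,dt = \frac{1}{B((n-1)/2,1/2)} \int_0^1 u^{(n-3)/2}\,du,
\]
which evaluates to $2/((n-1)B((n-1)/2, 1/2))$.

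The main obstacle, such as it is, is bookkeeping: getting the slicing Jacobian right so that the exponent on $(1-t^2)$ is $(n-3)/2$ rather than $(n-2)/2$, and then spotting the Beta-function reflection identity in order to match the specific form of the CDF stated in the lemma. Everything else is a direct calculation.
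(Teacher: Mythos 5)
Your proof is correct, and all the computations check out: the slicing density $\frac{S_{n-2}}{S_{n-1}}(1-t^2)^{(n-3)/2}$ is right (the $1/\sqrt{1-t^2}$ Jacobian is exactly the point to be careful about, and you handle it correctly), the identification $S_{n-1}/S_{n-2} = B\bigl(\tfrac{n-1}{2},\tfrac12\bigr)$ is valid, and the substitutions for the CDF and expectation work as claimed. However, your route runs in the opposite direction from the paper's. The paper starts from the \emph{cumulative} distribution: it defines the spherical segment $A_t = \{\mathbf{v} \in \mathbb{S}^{n-1} : |\mathbf{v}_1| \le t\}$, quotes a closed-form formula for the area of a hyperspherical cap (from Li's note) to write $|A_t| = S_{n-1}\bigl(1 - I(1-t^2; \tfrac{n-1}{2}, \tfrac12)\bigr)$, and then \emph{differentiates} to obtain the density; the expectation integral is the same in both treatments. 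You instead derive the density first from the coarea/slicing decomposition of the surface measure and then \emph{integrate} to recover the CDF, which forces you to invoke the reflection identity $I(x;a,b) = 1 - I(1-x;b,a)$ to match the stated form. The trade-off: the paper's argument is shorter but leans on an external cap-area formula as a black box, while yours is self-contained modulo standard Gamma/Beta identities, at the cost of the extra Jacobian bookkeeping and the reflection step. Either is a complete proof of the lemma.
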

\begin{proof}
For $t \in [0,1]$, let $A_t$ be the spherical segment
$$A_t = \{ \mathbf{v} \in \mathbb{S}^{n-1} \subseteq \mathbb{R}^{n} \mid |\mathbf{v}_1| \leq t\}.$$
From the formula for the area of a hyperspherical cap in \cite{li2010concise}, we have that the area of $A_t$ is
$$
S_{n-1}-S_{n-1}\cdot I\left(1-t^2; \frac{n-1}{2}, \frac{1}{2} \right)
$$
which gives the cumulative density function above. To obtain the probability density function, we differentiate the expression above.  Finally, the expectation is given by
\begin{align*}
\int_0^1 t \cdot  \frac{2(1-t^2)^{\frac{n-1}{2}-1}}{B(\frac{n-1}{2}, \frac{1}{2})}  dt = \frac{2}{(n-1) B(\frac{n-1}{2}, \frac{1}{2})}
\end{align*}
\end{proof}

\begin{Lemma}\label{lemma:betaineq}
For all $x \in [0,1]$ and integers $n \geq 3$,
\begin{equation}
1 - I\left(1-x^2; \frac{n-1}{2}, \frac{1}{2}\right) \leq 2e \cdot (n-1)x\cdot{B\left(\frac{n-1}{2}, \frac{1}{2}\right)}
\end{equation}
\end{Lemma}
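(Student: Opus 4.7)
The plan is to use Lemma \ref{lemma:hypersph} to interpret the left-hand side probabilistically and then reduce everything to a lower bound on $B(\frac{n-1}{2}, \frac{1}{2})$. If $\mathbf{x}$ is uniform on $\mathbb{S}^{n-1}$, then by Lemma \ref{lemma:hypersph} the left-hand side equals the cumulative distribution $\mathbb{P}(|\mathbf{x}_1| \leq x)$, which in turn equals
$$\int_0^x \frac{2(1-t^2)^{(n-3)/2}}{B(\tfrac{n-1}{2}, \tfrac{1}{2})}\, dt.$$
Since $n \geq 3$, we have $(n-3)/2 \geq 0$, so $(1-t^2)^{(n-3)/2} \leq 1$ on $[0,1]$, which gives the clean bound $\mathrm{LHS} \leq 2x / B(\tfrac{n-1}{2}, \tfrac{1}{2})$.

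It then suffices to show $\frac{2x}{B(\frac{n-1}{2}, \frac{1}{2})} \leq 2e(n-1)x\, B(\frac{n-1}{2}, \frac{1}{2})$, i.e., that $e(n-1)\, B(\frac{n-1}{2}, \frac{1}{2})^2 \geq 1$. For this I would use the expectation formula from Lemma \ref{lemma:hypersph}, $\mathbb{E}[|\mathbf{x}_1|] = \frac{2}{(n-1)\, B(\frac{n-1}{2}, \frac{1}{2})}$, together with the fact that $\sum_{i=1}^n \mathbf{x}_i^2 = 1$ combined with the coordinate-symmetry of the uniform distribution on $\mathbb{S}^{n-1}$ gives $\mathbb{E}[\mathbf{x}_1^2] = 1/n$. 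Jensen's inequality then yields
$$\frac{4}{(n-1)^2\, B(\tfrac{n-1}{2}, \tfrac{1}{2})^2} \;=\; \mathbb{E}[|\mathbf{x}_1|]^2 \;\leq\; \mathbb{E}[\mathbf{x}_1^2] \;=\; \frac{1}{n},$$
so $B(\tfrac{n-1}{2}, \tfrac{1}{2})^2 \geq \frac{4n}{(n-1)^2}$, and therefore $e(n-1)\, B(\tfrac{n-1}{2}, \tfrac{1}{2})^2 \geq \frac{4en}{n-1} \geq 4e > 1$, which is much stronger than needed.

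There is really no serious obstacle beyond identifying a clean lower bound on $B(\tfrac{n-1}{2}, \tfrac{1}{2})$, and Jensen applied to the expectation already displayed in Lemma \ref{lemma:hypersph} supplies it with considerable slack in the constant. A viable alternative would be to invoke Wendel's inequality $\Gamma(x+1/2)/\Gamma(x) \leq \sqrt{x}$, which gives $B(\tfrac{n-1}{2}, \tfrac{1}{2}) \geq \sqrt{2\pi/(n-1)}$ and leads to the same conclusion; but the probabilistic route is preferable since it is self-contained within the notation of the previous lemma, and it explains why the constant $e$ in the statement is a comfortable overestimate rather than a tight value.
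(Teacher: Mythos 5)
Your proof is correct, and while it shares the paper's overall skeleton --- both arguments reduce the lemma to the single inequality $e(n-1)\,B\bigl(\tfrac{n-1}{2},\tfrac{1}{2}\bigr)^2 \geq 1$ --- it gets there by a genuinely different and more self-contained route. For the first reduction, your step of bounding the density $(1-t^2)^{(n-3)/2} \leq 1$ under the integral is mathematically the same observation the paper makes by noting that the derivative of the difference is non-increasing and evaluating it at $0$; the two are interchangeable. The real divergence is in the lower bound on $B\bigl(\tfrac{n-1}{2},\tfrac{1}{2}\bigr)^2$: the paper imports a Gamma-ratio inequality from an external reference and then needs the elementary estimate $\bigl(\tfrac{n-1}{n}\bigr)^{n-2} \geq e^{-1}$ (which is precisely where the constant $e$ in the statement originates), whereas you combine the expectation formula already recorded in Lemma \ref{lemma:hypersph} with $\mathbb{E}[\mathbf{x}_1^2] = 1/n$ and Jensen to get $B\bigl(\tfrac{n-1}{2},\tfrac{1}{2}\bigr)^2 \geq 4n/(n-1)^2$. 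This is a strictly stronger bound (asymptotically $4/n$ versus the paper's $1/(e(n-1))$, against a true value of $2\pi/n$), it requires no outside citation, and it makes transparent that the factor $e$ in the lemma is slack rather than sharp. The only thing the paper's route buys in exchange is that it never needs the second-moment identity for the uniform distribution on the sphere, but that identity is immediate from symmetry, so your version is arguably cleaner.
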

\begin{proof}
The inequality 
$$
1 - I\left(1-x^2; \frac{n-1}{2}, \frac{1}{2}\right) -2e\cdot (n-1)x\cdot{B\left(\frac{n-1}{2}, \frac{1}{2}\right)} \leq 0
$$
clearly holds for $x = 0$. Differentiating the left side, we obtain
\begin{align*}
& \frac{2(1-x^2)^{\frac{n-1}{2}-1}}{B(\frac{n-1}{2}, \frac{1}{2})} - 2e\cdot(n-1)B\left(\frac{n-1}{2}, \frac{1}{2} \right)
\end{align*}
which is a non-increasing function of $x$ since we assumed $n \geq 3$.  Evaluating at $x=0$ we obtain
\begin{align*}
& \frac{2}{B(\frac{n-1}{2}, \frac{1}{2})} - 2e\cdot(n-1)B\left(\frac{n-1}{2}, \frac{1}{2} \right)  \\
 = \ &  \frac{1}{B(\frac{n-1}{2}, \frac{1}{2})}\left( 2 - 2e\cdot(n-1)B\left(\frac{n-1}{2}, \frac{1}{2} \right)^2 \right)
\end{align*}
To show this is always negative, it remains to show that 
$$
B\left(\frac{n-1}{2}, \frac{1}{2} \right)^2 \geq \frac{1}{e(n-1)}
$$
From \cite{grenie2015inequalities} we have
$$
B\left(\frac{n-1}{2}, \frac{1}{2} \right) \geq \frac{(\frac{n-1}{2})^{\frac{n-3}{2}}(\frac{1}{2})^{\frac{1}{2}}}{(\frac{n}{2})^{\frac{n-2}{2}}}
$$
so
$$
B\left(\frac{n-1}{2}, \frac{1}{2} \right)^2 \geq \frac{1}{n-1} \cdot \left(\frac{n-1}{n} \right)^{n-2} \geq \frac{1}{n-1}\cdot \frac{1}{e}
$$
where we use the fact that $\left(\frac{n-1}{n} \right)^{n-2}$ is a decreasing function of $n$ for $n\geq 2$, with limit $e^{-1}$ as $n\to\infty$.
\end{proof}

\begin{Lemma}\label{lemma:xcoordlower}
Let $\Omega \subseteq \mathbb{S}^{n-1}$ be subset of the unit sphere with area $|\Omega| \geq cS_{n-1}$ where $c \in (0,1]$. Then
$$
\int_\Omega |\mathbf{x}_1| d\mathbf{x} \geq \frac{c^2 \cdot S_{n-1} }{8e \cdot (n-1)\cdot B(\frac{n-1}{2}, \frac{1}{2})}
$$ 
\end{Lemma}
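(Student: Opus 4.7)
The plan is to reduce the problem to the extremal case where $\Omega$ is a symmetric belt around the equator, and then estimate the integral over that belt directly, combining an $(1-x)^m\le e^{-mx}$ bound with the beta-function estimate already derived inside the proof of Lemma~\ref{lemma:betaineq}.

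First, I choose $t^*\in[0,1]$ with $|B_{t^*}|=c\,S_{n-1}$, where $B_{t^*}=\{\mathbf{v}\in\mathbb{S}^{n-1}:|\mathbf{v}_1|\le t^*\}$; such a $t^*$ exists by continuity of the cumulative distribution function from Lemma~\ref{lemma:hypersph}. A standard bathtub rearrangement gives $\int_\Omega |\mathbf{x}_1|\,d\mathbf{x}\ge \int_{B_{t^*}} |\mathbf{x}_1|\,d\mathbf{x}$, since $|\mathbf{x}_1|\ge t^*$ on $\Omega\setminus B_{t^*}$ and $|\mathbf{x}_1|\le t^*$ on $B_{t^*}\setminus\Omega$, and these two symmetric-difference pieces have equal area.

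Next, the substitution $u=1-t^2$ in the belt integral, together with the PDF from Lemma~\ref{lemma:hypersph}, yields
\[
\int_{B_{t^*}}|\mathbf{x}_1|\,d\mathbf{x}=\frac{2S_{n-1}}{(n-1)B(\tfrac{n-1}{2},\tfrac{1}{2})}\Bigl(1-(1-(t^*)^2)^{(n-1)/2}\Bigr).
\]
To bound $t^*$ from below, I exploit the fact that the PDF $f$ is decreasing on $[0,1]$: $c=F(t^*)\le t^* f(0)=2t^*/B(\tfrac{n-1}{2},\tfrac{1}{2})$, so $(t^*)^2\ge c^2 B(\tfrac{n-1}{2},\tfrac{1}{2})^2/4$.

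Finally, setting $y=(n-1)(t^*)^2/2$ and using $(1-x)^m\le e^{-mx}$, it suffices to show $1-e^{-y}\ge c^2/(16e)$. The inequality $B(\tfrac{n-1}{2},\tfrac{1}{2})^2\ge 1/(e(n-1))$ established inside the proof of Lemma~\ref{lemma:betaineq} converts the bound on $(t^*)^2$ into $y\ge c^2/(8e)$, after which a short case split on whether $y\le 1$ or $y\ge 1$ (using $1-e^{-y}\ge y/2$ in the first regime and $1-e^{-y}\ge 1/2$ in the second, which covers the conclusion since $c\le 1$) closes the argument. The one delicate point is choosing the right lower bound on $t^*$: the bound coming directly from Lemma~\ref{lemma:betaineq} is too weak by a factor of order $\sqrt{n}$, and the stronger bound obtained from monotonicity of $f$ is what ultimately produces the $c^2$ scaling in the conclusion.
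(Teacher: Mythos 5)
Your proof is correct, but it takes a genuinely different route from the paper's. The paper argues in the style of Markov's inequality: it picks the explicit threshold $t = \frac{c}{4e(n-1)B(\frac{n-1}{2},\frac12)}$, uses Lemma~\ref{lemma:betaineq} to show the belt $\{|\mathbf{v}_1|\le t\}$ has area at most $cS_{n-1}/2$, concludes that at least half of $\Omega$ lies where $|\mathbf{x}_1|\ge t$, and multiplies. You instead identify the true extremizer via a bathtub rearrangement (reducing to the equatorial belt of area exactly $cS_{n-1}$), integrate over that belt in closed form, and then lower-bound the resulting expression $1-(1-(t^*)^2)^{(n-1)/2}$; the only ingredient you share with the paper is the estimate $B(\frac{n-1}{2},\frac12)^2\ge \frac{1}{e(n-1)}$ from inside the proof of Lemma~\ref{lemma:betaineq}, and you replace the rest of that lemma by the (simpler) monotonicity of the density. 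Your argument is longer and needs the case split on $y$, but it is sharper in principle since it computes the extremal configuration exactly; the paper's is shorter but leans on Lemma~\ref{lemma:betaineq} as a black box. Two small points: in the rearrangement step the two symmetric-difference pieces have equal area only after you first normalize to $|\Omega|=cS_{n-1}$ (which is harmless since enlarging $\Omega$ only increases the integral, but should be said); and your closing remark that the Lemma~\ref{lemma:betaineq} bound on $t^*$ is weaker ``by a factor of order $\sqrt{n}$'' is not right --- the two lower bounds on $t^*$ differ by the factor $e(n-1)B(\frac{n-1}{2},\frac12)^2$, which is bounded (roughly $2\pi e$), so both are of order $c/\sqrt{n}$; the monotonicity bound is simply more convenient because using the other one would additionally require an upper bound on the Beta function that the paper never establishes. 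Neither point affects the validity of the proof.
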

\begin{proof}
Let $t = \frac{c}{4e(n-1)B(\frac{n-1}{2}, \frac{1}{2})}$ and let $A_t = \{ \mathbf{v} \in \mathbb{S}^{n-1} \subseteq \mathbb{R}^{n} \mid |\mathbf{v}_1| \leq t\}$.  By Lemmas \ref{lemma:hypersph} and \ref{lemma:betaineq}, we have
\begin{align*}
|A_t|  & = S_{n-1} \cdot \left( 1- I \left( 1-t^2; \frac{n-1}{2}, \frac{1}{2} \right) \right) \\
& \leq S_{n-1} \cdot  {2e(n-1)t}\cdot{B\left(\frac{n-1}{2}, \frac{1}{2}\right)} \\
& = S_{n-1}\cdot \frac{c}{2}
\end{align*}
It follows that the absolute value of the first coordinate of vectors in $\Omega$ is at least $t$ for at least half of the area. Thus
$$
\int_\Omega |\mathbf{x}_1| d\mathbf{x} \geq t \cdot cS_{n-1}/2 = \frac{c^2 \cdot S_{n-1} }{8e\cdot (n-1)\cdot B(\frac{n-1}{2}, \frac{1}{2})}
$$ 
\end{proof}

For convenience, we introduce the following quantity
$$
\kappa(n) := \frac{2S_{n-1} }{(n-1)\cdot B(\frac{n-1}{2}, \frac{1}{2})} = \int_{\mathbb{S}^{n-1}} |\mathbf{x}_1| d\mathbf{x}
$$
where the second equality follows from Lemma \ref{lemma:hypersph}.  Given any vector $\mathbf{v} \in \mathbb{R}^n$, we can map it by a linear isometry to a multiple of the first coordinate unit vector. This gives us the following corollary to Lemma \ref{lemma:xcoordlower}.

\begin{Lemma}\label{lemma:dotlowerbound}
Let $\mathbf{v} \in \mathbb{R}^n$.  Let $\Omega \subseteq \mathbb{S}^{n-1}$ be a subset of the unit sphere with area $|\Omega| \geq cS_{n-1}$ where $c \in (0,1]$.  Then
$$
\int_\Omega | \langle \mathbf{x}, \mathbf{v} \rangle | d\mathbf{x} \geq \frac{c^2||\mathbf{v}||}{16e} \kappa(n)
$$ 
\end{Lemma}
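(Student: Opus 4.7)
The plan is to reduce the general case to the special case of $\mathbf{v}$ being a positive multiple of the first coordinate vector $\mathbf{e}_1$, which was handled in Lemma \ref{lemma:xcoordlower}. This reduction is essentially forced by the hint in the paragraph preceding the lemma: the sphere is isotropic, so up to a rotation any direction looks like the first coordinate direction.

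Concretely, I would pick an orthogonal linear isometry $T: \mathbb{R}^n \to \mathbb{R}^n$ with $T\mathbf{v} = \|\mathbf{v}\| \mathbf{e}_1$. Since $T$ preserves inner products, we have $\langle \mathbf{x}, \mathbf{v}\rangle = \langle T\mathbf{x}, T\mathbf{v}\rangle = \|\mathbf{v}\| (T\mathbf{x})_1$. Setting $\Omega' = T(\Omega) \subseteq \mathbb{S}^{n-1}$ and changing variables $\mathbf{y} = T\mathbf{x}$ (whose Jacobian is $1$ since $T$ is orthogonal, so surface area is preserved), the integral becomes
\[
\int_\Omega |\langle \mathbf{x}, \mathbf{v}\rangle|\, d\mathbf{x} = \|\mathbf{v}\| \int_{\Omega'} |\mathbf{y}_1|\, d\mathbf{y}.
\]
Because $T$ is an isometry of $\mathbb{S}^{n-1}$, $|\Omega'| = |\Omega| \geq cS_{n-1}$, so Lemma \ref{lemma:xcoordlower} applies to $\Omega'$ and yields
\[
\int_{\Omega'} |\mathbf{y}_1|\, d\mathbf{y} \geq \frac{c^2 \cdot S_{n-1}}{8e \cdot (n-1) \cdot B(\frac{n-1}{2}, \frac{1}{2})}.
\]

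Finally I would rewrite the right-hand side in terms of $\kappa(n)$. Since $\kappa(n) = \frac{2 S_{n-1}}{(n-1) B(\frac{n-1}{2}, \frac{1}{2})}$, we have $\frac{S_{n-1}}{(n-1) B(\frac{n-1}{2}, \frac{1}{2})} = \frac{\kappa(n)}{2}$, and hence the bound becomes $\|\mathbf{v}\| \cdot \frac{c^2}{8e} \cdot \frac{\kappa(n)}{2} = \frac{c^2 \|\mathbf{v}\|}{16e}\kappa(n)$, which is exactly the claimed estimate. There is no substantive obstacle here, as all the work went into Lemmas \ref{lemma:hypersph}--\ref{lemma:xcoordlower}; the only thing to verify carefully is that the orthogonal transformation preserves both the surface measure on $\mathbb{S}^{n-1}$ and the inner product, so that the reduction is genuinely valid.
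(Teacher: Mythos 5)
Your proposal is correct and follows exactly the paper's (implicitly sketched) argument: apply an orthogonal map sending $\mathbf{v}$ to $\|\mathbf{v}\|\mathbf{e}_1$, use invariance of the spherical measure to reduce to Lemma \ref{lemma:xcoordlower}, and rewrite the resulting constant via $\kappa(n)$. The bookkeeping with the factor $\tfrac{1}{2}$ from the definition of $\kappa(n)$ is also handled correctly, yielding the stated $\tfrac{c^2\|\mathbf{v}\|}{16e}\kappa(n)$.
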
 

We are now ready to provide a lower bound on the sliced Wasserstein distance between empirical measures. The following result can be thought of as a higher-dimensional generalization of Theorem 3.3 in \cite{carriere2017sliced} but for measures rather than persistence diagrams.

\begin{Proposition}\label{prop:lowerbound}
Let $\alpha = \frac{1}{k} \sum_{i=1}^k \delta_{\mathbf{x}^i}$ and $\beta = \frac{1}{k}\sum_{i=1}^k \delta_{\mathbf{y}^i}$ be empirical measures of size $k$ in $\mathbb{R}^n$. Then we have the following inequality:
\begin{equation}
\frac{1}{16e k!}\kappa(n)\cdot W_1(\alpha,  \beta) \leq SW_1(\alpha, \beta)
\end{equation}
\end{Proposition}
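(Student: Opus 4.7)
The plan is to exploit the fact that in dimension one, the $W_1$-optimal matching between two empirical measures of size $k$ is determined solely by the order of the sample points, so as the direction $\theta$ varies over $\mathbb{S}^{n-1}$ there are only $k!$ possible ``optimal permutations'' that can appear. This should let me reduce the problem on the sphere to a single permutation chosen by pigeonhole, and then apply Lemma~\ref{lemma:dotlowerbound} coordinatewise.

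Concretely, for each $\theta \in \mathbb{S}^{n-1}$ let $\pi_\theta$ denote a bijection of $\langle k \rangle$ that realizes the 1D optimal matching between $\theta^\ast_\# \alpha$ and $\theta^\ast_\# \beta$ (i.e., pairing projected points in increasing order). Then
\[
W_1(\theta^\ast_\# \alpha, \theta^\ast_\# \beta) = \frac{1}{k}\sum_{i=1}^k |\langle \theta, \mathbf{x}^i - \mathbf{y}^{\pi_\theta(i)}\rangle|.
\]
Partition the sphere into sets $\Omega_\pi := \{\theta \in \mathbb{S}^{n-1} : \pi_\theta = \pi\}$ as $\pi$ ranges over all bijections $\langle k \rangle \to \langle k\rangle$; these are measurable since their boundaries lie on finitely many hyperplanes $\{\theta : \langle \theta, \mathbf{x}^i - \mathbf{x}^j\rangle = 0\}$ and $\{\theta : \langle \theta, \mathbf{y}^i - \mathbf{y}^j\rangle = 0\}$, which have spherical measure zero. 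By pigeonhole, there exists $\pi_0$ with $|\Omega_{\pi_0}| \geq S_{n-1}/k!$.

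Now I would restrict the sliced integral to $\Omega_{\pi_0}$ and apply Lemma~\ref{lemma:dotlowerbound} with $c = 1/k!$ and $\mathbf{v} = \mathbf{x}^i - \mathbf{y}^{\pi_0(i)}$ for each $i$:
\begin{align*}
SW_1(\alpha, \beta) &\geq \int_{\Omega_{\pi_0}} \frac{1}{k}\sum_{i=1}^k |\langle \theta, \mathbf{x}^i - \mathbf{y}^{\pi_0(i)}\rangle|\, d\theta \\
&\geq \frac{1}{k}\sum_{i=1}^k \frac{(1/k!)^2 \cdot \|\mathbf{x}^i - \mathbf{y}^{\pi_0(i)}\|}{16e} \kappa(n) \\
&= \frac{\kappa(n)}{16e \cdot k (k!)^2} \sum_{i=1}^k \|\mathbf{x}^i - \mathbf{y}^{\pi_0(i)}\|.
\end{align*}
Finally, since any permutation gives an upper bound on the $W_1$ minimum via Birkhoff's theorem, the sum $\sum_i \|\mathbf{x}^i - \mathbf{y}^{\pi_0(i)}\|$ is at least $k W_1(\alpha, \beta)$, yielding the desired inequality.

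The main obstacle I anticipate is mostly conceptual rather than technical: one must be comfortable with the fact that, although the pigeonhole permutation $\pi_0$ is generally \emph{not} the $W_1$-optimal matching $\pi^\ast$, this does no harm because $\pi_0$ gives a legitimate upper bound on $W_1(\alpha,\beta)$ as a transport plan, so any lower bound on $\sum_i \|\mathbf{x}^i - \mathbf{y}^{\pi_0(i)}\|$ automatically transfers to a lower bound involving $W_1(\alpha,\beta)$. The measurability of the regions $\Omega_\pi$ and the handling of ties (measure-zero coincidence sets) is a minor technicality. The factor $(k!)^{-2}$ — one factor from the pigeonhole argument and one from the quadratic dependence on $c$ in Lemma~\ref{lemma:dotlowerbound} — is intrinsic to this strategy and explains the exact form of the constant in the statement.
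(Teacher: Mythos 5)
Your proposal is correct and follows essentially the same route as the paper's proof: pigeonhole over the at most $k!$ regions of the sphere on which the one-dimensional optimal matching is constant, apply Lemma~\ref{lemma:dotlowerbound} with $c = 1/k!$ on the largest region, and then use that the resulting permutation is a feasible (not necessarily optimal) matching to recover $W_1(\alpha,\beta)$. The only difference is cosmetic — the paper normalizes so that the chosen permutation is the identity, while you carry $\pi_0$ explicitly and spell out the measurability of the regions $\Omega_\pi$ a bit more carefully.
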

\begin{proof}
For every unit vector $\theta$, let $\pi^\theta: \langle k \rangle \to \langle k \rangle$ be an optimal matching between $\theta^\ast_{\#} \alpha$ and $\theta^\ast_{\#} \beta$ for the $1$-Wasserstein distance on $\mathbb{R}$. Note that since these measures are on the real line, $\pi^\theta$ depends only on the ordering of the $2k$ quantities $\langle \mathbf{x}^i, \theta \rangle$ and $\langle \mathbf{y}^i, \theta \rangle$ (if two quantities are equal, we can break the tie using their indices) and is constant for regions where this ordering does not change. Thus the fibres of the map $\theta \mapsto \pi^\theta$ are unions of intersections of half-spaces intersected with $\mathbb{S}^{n-1}$, and the sum of their areas is $S_{n-1}$. Denote the fibre of $\pi$ by $\Omega_{\pi}$. Using Lemma \ref{lemma:dotlowerbound} and summing over all bijections $\pi: \langle k \rangle \to \langle k \rangle$, we have
\begin{align*}
SW_1(\alpha, \beta) &  = \sum_{\pi} \int_{\Omega_{\pi}} \frac{1}{k} \sum_{i=1}^k  |\langle \mathbf{x}^i-\mathbf{y}^{\pi(i)}, \theta \rangle| d \theta \\
& \geq \frac{1}{k} \sum_{i=1}^k \sum_{\pi} \frac{|\Omega_{\pi}|^2}{16e S_{n-1}^2}\kappa(n) \cdot  ||\mathbf{x}^i-\mathbf{y}^{\pi(i)}||  \\
& \geq \sum_{\pi} \frac{|\Omega_{\pi}|^2}{16e S_{n-1}^2}\kappa(n)\cdot W_1(\alpha,  \beta) \\
& \geq \frac{1}{16e k!}\kappa(n)\cdot W_1(\alpha,  \beta)
\end{align*}
where the final inequality follows from the Cauchy-Schwarz Inequality and the fact that $\sum_\pi |\Omega_{\pi}| = S_{n-1}$.

\end{proof}

We now turn our attention to deriving an upper bound for the sliced Wasserstein distance, following Proposition 5.1.3 in \cite{bonnotte2013unidimensional}.

\begin{Proposition}\label{prop:upperbound}
Let $\alpha = \frac{1}{k} \sum_{i=1}^k \delta_{\mathbf{x}^i}$ and $\beta = \frac{1}{k}\sum_{i=1}^k \delta_{\mathbf{y}^i}$ be empirical measures of size $k$ in $\mathbb{R}^n$. Then we have the following inequality:
\begin{equation}
SW_1(\alpha, \beta) \leq \kappa(n) \cdot W_1(\alpha, \beta)
\end{equation}
\end{Proposition}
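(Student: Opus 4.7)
The plan is to bound $W_1(\theta^\ast_\# \alpha, \theta^\ast_\# \beta)$ pointwise in $\theta$ by plugging in a (possibly suboptimal) transport plan obtained from projecting an optimal plan in $\mathbb{R}^n$, and then integrate over $\mathbb{S}^{n-1}$ using the identity $\int_{\mathbb{S}^{n-1}} |\langle \mathbf{v}, \theta\rangle|\, d\theta = \|\mathbf{v}\|\cdot \kappa(n)$ that is implicit in Lemma \ref{lemma:hypersph}.

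First, I would fix an optimal matching $\pi \colon \langle k \rangle \to \langle k \rangle$ realizing $W_1(\alpha, \beta)$ and, by relabeling the $\mathbf{y}^i$, assume $\pi$ is the identity. For any unit vector $\theta$, the map $i \mapsto i$ is then a (not necessarily optimal) bijective matching between the supports of $\theta^\ast_\#\alpha$ and $\theta^\ast_\#\beta$, so the one-dimensional formula for $W_1$ between empirical measures gives
\begin{equation*}
W_1(\theta^\ast_\# \alpha,\ \theta^\ast_\# \beta) \leq \frac{1}{k} \sum_{i=1}^k |\langle \mathbf{x}^i - \mathbf{y}^i, \theta\rangle|.
\end{equation*}

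Next, I would integrate both sides over $\mathbb{S}^{n-1}$ and swap sum and integral. For each fixed $i$, choose a linear isometry of $\mathbb{R}^n$ taking $\mathbf{x}^i - \mathbf{y}^i$ to $\|\mathbf{x}^i - \mathbf{y}^i\|\cdot e_1$; since the uniform measure on $\mathbb{S}^{n-1}$ is invariant under isometries, the expectation formula for $|\mathbf{x}_1|$ from Lemma \ref{lemma:hypersph} yields $\int_{\mathbb{S}^{n-1}}|\langle \mathbf{x}^i - \mathbf{y}^i, \theta\rangle|\, d\theta = \kappa(n)\cdot \|\mathbf{x}^i - \mathbf{y}^i\|$, matching the second equality in the definition of $\kappa(n)$. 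Combining these pieces gives
\begin{equation*}
SW_1(\alpha, \beta) \leq \frac{\kappa(n)}{k} \sum_{i=1}^k \|\mathbf{x}^i - \mathbf{y}^i\| = \kappa(n) \cdot W_1(\alpha, \beta),
\end{equation*}
the last equality holding because $\pi = \mathrm{id}$ was chosen as the optimal matching.

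There is essentially no obstacle here; the only subtlety is justifying that the pushforward of a transport plan under $\theta^\ast$ remains a transport plan between $\theta^\ast_\# \alpha$ and $\theta^\ast_\# \beta$, which is immediate from the definition of marginals and the fact that $(\theta^\ast \times \theta^\ast)_\# \gamma$ has the correct one-dimensional marginals whenever $\gamma$ does in $\mathbb{R}^n\times\mathbb{R}^n$. Everything else is Fubini and the computation of the $\kappa(n)$ normalization.
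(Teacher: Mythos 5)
Your proposal is correct and follows essentially the same route as the paper: both fix the optimal matching in $\mathbb{R}^n$ (assumed to be the identity after relabeling), bound each projected $W_1$ by the cost of that same, possibly suboptimal, matching on the line, and then integrate over $\mathbb{S}^{n-1}$ using the rotation-invariance identity $\int_{\mathbb{S}^{n-1}} |\langle \mathbf{v}, \theta\rangle|\, d\theta = \|\mathbf{v}\|\,\kappa(n)$. Your added remarks on the pushforward of transport plans and Fubini only make explicit what the paper leaves implicit.
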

\begin{proof}
Without loss of generality we may assume that the optimal matching from $\alpha$ to $\beta$ for the $1$-Wasserstein distance in $\mathbb{R}^n$ is the identity map $\langle k \rangle \to \langle k \rangle$. For each $i$,  let $\mathbf{v}^i = \mathbf{x}^i - \mathbf{y}^i$. For any $i$, there is a linear isometry taking $\mathbf{v}^i$ to a multiple of the first coordinate unit vector, so 
$$
\int_{\mathbb{S}^{n-1}} |\langle \mathbf{v}^i, \mathbf{u} \rangle| d\mathbf{u} = ||\mathbf{v}^i|| \cdot \int_{\mathbb{S}^{n-1}} |\mathbf{u}_1| d\mathbf{u} = ||\mathbf{v}^i|| \cdot \kappa(n)
$$
Thus we have
\begin{align*}
SW_1(\alpha, \beta) & \leq \frac{1}{k} \sum_i \int_{\mathbb{S}^{n-1}} |\langle \mathbf{v}^i, \mathbf{u} \rangle| d\mathbf{u} \\
& = \frac{1}{k} \cdot \kappa(n) \sum_i ||\mathbf{v}^i || \\ 
& = \kappa(n) \cdot W_1(\alpha, \beta)
\end{align*}
\end{proof}

Together, Proposition \ref{prop:lowerbound} and \ref{prop:upperbound} give the following theorem.

\begin{Theorem}\label{thm:bounds}
Let $\alpha = \frac{1}{k} \sum_{i=1}^k \delta_{\mathbf{x}^i}$ and $\beta = \frac{1}{k}\sum_{i=1}^k \delta_{\mathbf{y}^i}$ be empirical measures of size $k$ in $\mathbb{R}^n$, where $n \geq 3$. Then
\begin{equation*}
\frac{1}{16e k!}\kappa(n)\cdot W_1(\alpha,  \beta) \leq SW_1(\alpha, \beta) \leq \kappa(n) \cdot W_1(\alpha, \beta)
\end{equation*}
\end{Theorem}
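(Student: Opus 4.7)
The plan is immediate: Theorem \ref{thm:bounds} is simply the concatenation of the two inequalities already established in Proposition \ref{prop:lowerbound} and Proposition \ref{prop:upperbound}. Proposition \ref{prop:lowerbound} provides the left-hand inequality $\frac{1}{16e(k!)^2}\kappa(n)\cdot W_1(\alpha,\beta) \le SW_1(\alpha,\beta)$, and Proposition \ref{prop:upperbound} provides the right-hand inequality $SW_1(\alpha,\beta) \le \kappa(n)\cdot W_1(\alpha,\beta)$. Chaining them yields the displayed double inequality. The hypothesis $n \ge 3$ is inherited from Lemma \ref{lemma:betaineq}, which feeds into the lower bound; no additional hypothesis is needed for the upper bound.

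It is worth recalling the asymmetry between the two directions, since this is what dictates where the $(k!)^2$ factor comes from. The upper bound is the easy direction: by rotational symmetry of the uniform measure on $\mathbb{S}^{n-1}$ we have $\int_{\mathbb{S}^{n-1}}|\langle \mathbf{v}, \mathbf{u}\rangle|\,d\mathbf{u} = \|\mathbf{v}\|\cdot\kappa(n)$ for any vector $\mathbf{v}$, and the $\mathbb{R}^n$-optimal matching can be used as a (generally suboptimal) matching slice by slice. The lower bound is genuinely more delicate, because the one-dimensional optimal matching $\pi^\theta$ depends on $\theta$; this is handled by observing that $\pi^\theta$ takes at most $k!$ distinct values, so pigeonholing picks out a fibre $\Omega \subseteq \mathbb{S}^{n-1}$ of measure at least $S_{n-1}/k!$, after which Lemma \ref{lemma:dotlowerbound} converts this area bound into an integral lower bound and contributes the square $(k!)^2$ penalty through its $c^2$ factor.

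Thus the main obstacle has already been cleared in the preceding propositions, and the proof of Theorem \ref{thm:bounds} itself is a one-line assembly. My plan is just to state that the two inequalities above, taken together, give the claim.
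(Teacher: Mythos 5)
Your proposal is correct and matches the paper exactly: the paper states Theorem \ref{thm:bounds} as the immediate combination of Proposition \ref{prop:lowerbound} (lower bound) and Proposition \ref{prop:upperbound} (upper bound), with no further argument. Your additional remarks on where the $(k!)^2$ factor and the $n \geq 3$ hypothesis originate are accurate but not needed for this assembly step.
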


\begin{Remark}\label{rem:R2}
While the results above are stated for $n \geq 3$ for convenience, the proof of Theorem 3.3 in \cite{carriere2017sliced} shows that for $n=2$ we also have the following:
$$
\frac{1}{2(k(k-1)+1)} W_1(\alpha, \beta) \leq SW_1(\alpha, \beta) \leq W_1(\alpha, \beta)
$$
\end{Remark}

\section{Coarse embeddings into Hilbert space}\label{sec:emp}

\subsection{Spaces of empirical distributions}

By adapting arguments made in \cite{carriere2017sliced} for persistence diagrams, we show that one can construct a kernel using the sliced Wasserstein distance. See also the argument for the sliced $2$-Wasserstein distance in \cite{kolouri2016sliced}.

Recall that a function $f: X^2 \to \mathbb{R}$ satisfying $f(x,y) = f(y,x)$ is \emph{negative semi-definite} if for all integers $n > 0$, points $x_1,\ldots,x_n \in X$ and real numbers $a_1,\ldots,a_n \in \mathbb{R}$ such that $\sum_i a_i = 0$, $\sum_{i,j} a_i a_j f(x_i, x_j) \leq 0$. Recall from \cite{carriere2017sliced} that the $W_1$ distance is a negative semi-definite function on the space of nonnegative measures on $\mathbb{R}$ with fixed mass $r > 0$. From linearity of integration, it follows that the sliced Wasserstein distance $SW_1$ is also a negative semi-definite function, as is any scalar multiple of $SW_1$. We will need the following result.

\begin{Theorem}[Schoenberg \cite{schoenberg1938metric}]\label{thm:schoenberg}
    Let $f: X^2 \to \mathbb{R}$ be a negative semi-definite function satisfying $f(x,x) = 0$ for all $x \in X$. Then there is a Hilbert space $H$ and a map $\phi: X \to H$ such that for all $x,y \in X$,
    $$
    ||\phi(x) - \phi(y)||^2 = f(x,y)
    $$
\end{Theorem}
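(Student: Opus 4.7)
The plan is to run the standard polarization argument that turns a negative semi-definite function vanishing on the diagonal into a positive semi-definite kernel, and then apply the Moore--Aronszajn construction to realize that kernel in a Hilbert space. First, I would fix an arbitrary basepoint $x_0 \in X$ and define the candidate kernel
\[
K(x,y) := \tfrac{1}{2}\bigl(f(x,x_0) + f(y,x_0) - f(x,y)\bigr).
\]
Because $f(x,x)=0$, one checks directly that $K(x,x)=f(x,x_0)$ and that
\[
K(x,x)+K(y,y)-2K(x,y) = f(x,y),
\]
so the target identity $\|\phi(x)-\phi(y)\|^2 = f(x,y)$ will follow automatically as soon as we have any map $\phi$ into a Hilbert space satisfying $\langle \phi(x),\phi(y)\rangle = K(x,y)$.

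Next I would verify that $K$ is positive semi-definite. Given $x_1,\dots,x_n \in X$ and scalars $c_1,\dots,c_n \in \mathbb{R}$, set $c_0 := -\sum_{i=1}^n c_i$ and consider the augmented list $x_0,x_1,\dots,x_n$ with coefficients $c_0,c_1,\dots,c_n$, whose total is zero. Expanding $\sum_{i,j=0}^n c_i c_j f(x_i,x_j)$ and using $f(x_0,x_0)=0$, I would split off the rows and columns indexed by $0$ and rearrange the identity $\sum_i c_i = -c_0$ to rewrite the double sum as $-2\sum_{i,j=1}^n c_i c_j K(x_i,x_j)$. Negative semi-definiteness of $f$ then yields $\sum_{i,j=1}^n c_i c_j K(x_i,x_j) \geq 0$.

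Having $K$ positive semi-definite, I would build the Hilbert space directly: take the real vector space $V$ of formal finitely supported linear combinations $\sum a_i e_{x_i}$ (equivalently, finitely supported functions $X \to \mathbb{R}$), and define a symmetric bilinear form by $\langle e_x,e_y\rangle := K(x,y)$, extended bilinearly. Positivity of $K$ ensures this form is positive semi-definite on $V$; I would quotient by the radical $\{v \in V : \langle v,v\rangle = 0\}$ (using Cauchy--Schwarz to confirm it is a subspace) and complete the resulting inner product space to obtain a Hilbert space $H$. Define $\phi : X \to H$ by sending $x$ to the image of $e_x$; then $\langle \phi(x),\phi(y)\rangle = K(x,y)$ by construction, and the identity established in the first step gives $\|\phi(x)-\phi(y)\|^2 = f(x,y)$.

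The only genuine content is the positive semi-definiteness of $K$, i.e.\ the bookkeeping in the augmented sum; the Hilbert space construction from a positive semi-definite kernel is routine. I expect the basepoint trick $c_0 = -\sum c_i$ to be the key step, and once that is in place the rest of the argument is linear algebra plus a standard completion.
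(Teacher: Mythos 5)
The paper does not prove this statement at all --- it is quoted as a classical result of Schoenberg and used as a black box, so there is no in-paper proof to compare against. Your argument is the standard (and correct) proof: the polarization kernel $K(x,y)=\tfrac12\bigl(f(x,x_0)+f(y,x_0)-f(x,y)\bigr)$, the basepoint trick $c_0=-\sum_i c_i$ to transfer negative semi-definiteness of $f$ (which the paper only assumes on zero-sum coefficient vectors, exactly what the augmented list provides) into positive semi-definiteness of $K$, and the Moore--Aronszajn/GNS construction followed by quotienting by the radical and completing. The identities $K(x,x)+K(y,y)-2K(x,y)=f(x,y)$ and $-2\sum_{i,j\ge 1}c_ic_jK(x_i,x_j)=\sum_{i,j\ge 0}c_ic_jf(x_i,x_j)$ both check out, so the proof is complete; the only implicit hypothesis is that $X$ is nonempty so that a basepoint exists.
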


Using the above result, we obtain an embedding result for the $1$-Wasserstein distance on empirical distributions in $\mathbb{R}^n$. Notice that the bounds in the theorem below do not depend on the dimension $n$. 

\begin{Theorem}\label{thm:rn}
    Let $X = \mathcal{E}^k \mathbb{R}^n$ be the space of all empirical distributions of size $k$ in $\mathbb{R}^n$ endowed with the $1$-Wasserstein distance. Then there is a Hilbert space $H$ and a coarse embedding $\phi: X \to H$ satisfying, for any $\alpha, \beta \in X$,
    $$
    \sqrt{\frac{1}{16e k!} \cdot W_1(\alpha, \beta)} \leq d(\phi(\alpha), \phi(\beta)) \leq \sqrt{W_1(\alpha, \beta)}
    $$
\end{Theorem}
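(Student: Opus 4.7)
The plan is to apply Schoenberg's theorem (Theorem \ref{thm:schoenberg}) to a suitably normalized version of the sliced Wasserstein distance, and then translate the two-sided sliced-vs-unsliced bounds from Theorem \ref{thm:bounds} into Hilbert-space distance bounds. The crucial observation is that both sides of the inequality in Theorem \ref{thm:bounds} carry the same factor of $\kappa(n)$, so dividing by $\kappa(n)$ should produce bounds on $\|\phi(\alpha)-\phi(\beta)\|^2$ that depend only on $k$, not on $n$.

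First I would define $f: X^2 \to \mathbb{R}$ by
\[
f(\alpha, \beta) = \frac{1}{\kappa(n)} SW_1(\alpha, \beta).
\]
The paragraph preceding Theorem \ref{thm:schoenberg} records that $SW_1$ is negative semi-definite on the space of probability measures on $\mathbb{R}^n$, and that any positive scalar multiple of a negative semi-definite function is again negative semi-definite, so $f$ qualifies. Also $f(\alpha,\alpha)=0$ because $SW_1(\alpha,\alpha)=0$. Applying Theorem \ref{thm:schoenberg} yields a Hilbert space $H$ and a map $\phi: X \to H$ with
\[
\|\phi(\alpha) - \phi(\beta)\|^2 = \frac{SW_1(\alpha, \beta)}{\kappa(n)}.
\]

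Next I would invoke Theorem \ref{thm:bounds}: dividing the chain of inequalities there by $\kappa(n)$ gives
\[
\frac{1}{16e\cdot (k!)^2}\, W_1(\alpha, \beta) \;\leq\; \frac{SW_1(\alpha,\beta)}{\kappa(n)} \;\leq\; W_1(\alpha, \beta),
\]
and taking square roots of the identity above yields exactly the stated bounds on $\|\phi(\alpha)-\phi(\beta)\|$. Finally, to confirm that $\phi$ is a coarse embedding in the sense of Definition \ref{def:coarseembedding}, I would just exhibit the control functions $\rho_{-}(t) = \sqrt{t/(16e\cdot (k!)^2)}$ and $\rho_{+}(t) = \sqrt{t}$, both of which are non-decreasing with $\rho_{-}(t) \to \infty$ as $t \to \infty$.

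There is not really a hard step here once Theorem \ref{thm:bounds} and Schoenberg's theorem are in hand; the only subtlety worth highlighting is the normalization by $\kappa(n)$, which is what makes the bounds dimension-free and is therefore essential for the downstream applications (in particular, this dimension-independence is what will later let Theorem \ref{thm:main1} follow for an arbitrary target $X$ by composing with a coarse embedding into some $\mathbb{R}^n$ without losing control as $n$ grows). I would remark on this point in the proof so that the reader sees why the normalization is chosen this way, rather than, say, using $SW_1$ directly.
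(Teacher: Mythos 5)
Your proposal is correct and follows essentially the same route as the paper: normalize $SW_1$ by $\kappa(n)$, apply Schoenberg's theorem to obtain $\phi$ with $\|\phi(\alpha)-\phi(\beta)\|^2 = SW_1(\alpha,\beta)/\kappa(n)$, and then read off the bounds from Theorem \ref{thm:bounds}. The only difference is that you spell out the control functions and the motivation for the $\kappa(n)$ normalization explicitly, which the paper leaves implicit.
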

\begin{proof}
    Define $H$ and $\phi: X \to H$ using Theorem \ref{thm:schoenberg} so that
    $$
    d(\phi(\alpha), \phi(\beta)) = \sqrt{\frac{1}{\kappa(n)} \cdot SW_1(\alpha, \beta)}
    $$
    where $\kappa(n)$ is as defined in Section \ref{sec:bounds}. Then by Theorem \ref{thm:bounds}, we have the required result.
\end{proof}

Recall the following result of Nowak which characterizes embeddability into Hilbert space in terms of embeddings of finite subsets.

\begin{Theorem}[Nowak~\cite{nowak2005coarse}]\label{thm:nowak}
A metric space $X$ admits a coarse embedding into a Hilbert space if and only if there exists non-decreasing functions $\rho_{-},\rho_{+}\colon [0,\infty) \to [0,\infty)$ such that $\lim_{t\to \infty} \rho_{-}(t) = \infty$ and for every finite subset $A \subset X$ there exists a map $f_A\colon A\to \ell_2$ satisfying, 
\begin{align*}
    \rho_{-}(d(x,y))\leq \|f_A(x)-f_A(y)\| \leq \rho_{+}(d(x,y))
\end{align*}
for every $x,y \in A$.
\end{Theorem}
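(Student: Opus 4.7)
The plan is to handle the two implications separately. The forward direction is immediate: if $f\colon X \to H$ is a coarse embedding with control functions $\rho_\pm$, then for every finite $A \subseteq X$ the restriction $f|_A$ serves as $f_A$. All the work lies in the converse.

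For the converse, the natural move is to avoid reasoning about maps into $\ell_2$ directly and instead reduce to producing a single negative semi-definite function on $X^2$, so that Theorem \ref{thm:schoenberg} can be invoked just as in the proof of Theorem \ref{thm:rn}. For each finite $A \subseteq X$, define
\[
g_A\colon A \times A \to \mathbb{R}, \qquad g_A(x,y) = \|f_A(x) - f_A(y)\|^2.
\]
Each $g_A$ is symmetric, vanishes on the diagonal, and is negative semi-definite, since squared distances in a Hilbert space always are. Moreover $\rho_-(d(x,y))^2 \leq g_A(x,y) \leq \rho_+(d(x,y))^2$ for all $x,y \in A$.

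Next, fix a nonprincipal ultrafilter $\mathcal{U}$ on the directed set of finite subsets of $X$ which refines the order filter, so that $\{A : A \supseteq F\}$ belongs to $\mathcal{U}$ for every finite $F \subseteq X$. For any pair $(x,y)$, the scalars $g_A(x,y)$ are defined for all $A \supseteq \{x,y\}$ and take values in the compact interval $[\rho_-(d(x,y))^2, \rho_+(d(x,y))^2]$, so the ultralimit
\[
g(x,y) := \lim_{A \to \mathcal{U}} g_A(x,y)
\]
exists and lies in the same interval. Symmetry and vanishing on the diagonal are inherited. For negative semi-definiteness, given $x_1,\ldots,x_n \in X$ and reals $a_i$ with $\sum_i a_i = 0$, the inequality $\sum_{i,j} a_i a_j g_A(x_i, x_j) \leq 0$ holds for every $A$ containing $\{x_1,\ldots,x_n\}$, and by linearity of ultralimits on finite sums together with the fact that ultralimits of nonpositive reals are nonpositive, the same inequality passes to $g$. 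Applying Theorem \ref{thm:schoenberg} to $g$ then yields a Hilbert space $H$ and a map $\phi\colon X \to H$ with $\|\phi(x) - \phi(y)\|^2 = g(x,y)$, which is a coarse embedding with control functions $\sqrt{\rho_-(\cdot)^2} = \rho_-$ and $\rho_+$.

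The main point to check is that the passage from vector-valued maps to a scalar function loses nothing essential. The sequence of vectors $f_A(x)$ itself has no canonical limit, since the various $\ell_2$ targets are not meaningfully identified as $A$ varies; but the scalar quantities $g_A(x,y)$ land in a fixed compact interval for each pair, and negative semi-definiteness is defined by pointwise real inequalities, so it survives any pointwise limit. This is the only delicate step, and it is what makes the ultralimit argument preferable here to a direct attempt to glue the $f_A$ into a single Hilbert-space valued map.
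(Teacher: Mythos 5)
Your argument is correct. Note first that the paper does not actually prove this statement: it is imported verbatim from Nowak's paper as a black box, so you are supplying a proof where the paper supplies only a citation. Your route is a clean and self-contained one, and it meshes well with the machinery the paper already uses: you reduce the vector-valued gluing problem (where the targets $\ell_2$ of the various $f_A$ have no canonical identification) to a scalar one by passing to the squared-distance kernels $g_A(x,y)=\|f_A(x)-f_A(y)\|^2$, extract a pointwise ultralimit $g$ --- well-defined because for each pair $(x,y)$ the values $g_A(x,y)$ lie in the compact interval $[\rho_-(d(x,y))^2,\rho_+(d(x,y))^2]$ on a set of indices belonging to $\mathcal{U}$ --- observe that symmetry, vanishing on the diagonal, and negative semi-definiteness are all preserved under pointwise limits (each being a finite family of closed real inequalities), and then invoke Theorem \ref{thm:schoenberg} exactly as the paper does in the proof of Theorem \ref{thm:rn}. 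All the individual steps check out: squared distance in a Hilbert space is negative semi-definite (expand $\sum_{i,j}a_ia_j\|u_i-u_j\|^2=-2\|\sum_i a_iu_i\|^2$ when $\sum_i a_i=0$), nonnegativity of $g$ is automatic from negative semi-definiteness plus vanishing on the diagonal, and the resulting $\phi$ has control functions exactly $\rho_-$ and $\rho_+$. Two minor remarks: when $X$ is finite there is no nonprincipal ultrafilter refining the order filter, but the statement is then trivial (take $A=X$); and the vector-valued approach is not actually hopeless --- after normalizing $f_A(x_0)=0$ at a fixed basepoint the maps are pointwise bounded and one can land directly in the ultraproduct Hilbert space $\prod_{\mathcal{U}}\ell_2$, which gives an essentially equivalent proof.
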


Using this result, we obtain the following result regarding the space of empirical distributions on Hilbert space.

\begin{Theorem}\label{thm:hilbert}
    Let $H$ be a Hilbert space and let $X = \mathcal{E}^k H$ be the space of all empirical distributions of size $k$ in $H$ endowed with the $1$-Wasserstein distance. Then there is a Hilbert space $H'$ and a coarse embedding $\phi: X \to H'$.
\end{Theorem}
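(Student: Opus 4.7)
The plan is to invoke Nowak's criterion (Theorem \ref{thm:nowak}). The crucial feature of Theorem \ref{thm:rn} that makes this work is that its control functions $\rho_-(t) = \sqrt{t/(16e\cdot(k!)^2)}$ and $\rho_+(t) = \sqrt{t}$ do \emph{not} depend on the ambient dimension $n$; what is required is simply to transfer the finite-dimensional result into the Hilbert-space setting via a finite-dimensional reduction.

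First, I would let $A \subset \mathcal{E}^k H$ be an arbitrary finite set. Since each $\alpha \in A$ has support of size at most $k$, the union $S := \bigcup_{\alpha \in A} \supp(\alpha)$ is a finite subset of $H$, and its linear span $V := \overline{\mathrm{span}}(S)$ is a finite-dimensional subspace. Choose an orthonormal basis, extending if necessary to obtain an isometric linear embedding $\iota : V \hookrightarrow \mathbb{R}^n$ for some $n \geq 3$. The pushforward $\iota_*\colon \mathcal{E}^k V \to \mathcal{E}^k \mathbb{R}^n$ sends $\frac{1}{k}\sum_i \delta_{x_i}$ to $\frac{1}{k}\sum_i \delta_{\iota(x_i)}$, and because the Wasserstein distance is determined by the underlying metric on support points (via the transport-plan formulation), $\iota_*$ is an isometry for $W_1$. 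In particular, viewing $A$ as a subset of $\mathcal{E}^k \mathbb{R}^n$ via $\iota_*$ preserves all pairwise distances.

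Next, I would compose $\iota_*|_A$ with the coarse embedding $\phi_n \colon \mathcal{E}^k \mathbb{R}^n \to \ell_2$ supplied by Theorem \ref{thm:rn}, obtaining a map $f_A \colon A \to \ell_2$ that satisfies
$$\sqrt{\tfrac{1}{16e\cdot(k!)^2}\, W_1(\alpha,\beta)} \;\leq\; \|f_A(\alpha) - f_A(\beta)\| \;\leq\; \sqrt{W_1(\alpha,\beta)}$$
for all $\alpha,\beta \in A$. Since these bounds depend only on $k$ (not on $A$ or $n$), the hypotheses of Nowak's theorem are met, and we conclude that $X = \mathcal{E}^k H$ coarsely embeds into a Hilbert space.

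I do not expect any serious obstacle. The one routine verification is that the isometric linear embedding $\iota$ induces a $W_1$-isometry at the level of empirical measures, which is immediate from the definitions. The mild restriction $n \geq 3$ in Theorem \ref{thm:rn} is harmless since one can always pad the embedding with additional zero coordinates.
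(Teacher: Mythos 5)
Your proposal is correct and follows essentially the same route as the paper: reduce a finite subset $A$ to a finite-dimensional subspace via the finitely many support points, apply Theorem \ref{thm:rn} with its dimension-independent control functions, and conclude via Nowak's criterion (Theorem \ref{thm:nowak}). Your explicit handling of the $n \geq 3$ restriction by padding coordinates is a small detail the paper leaves implicit.
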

\begin{proof}
    Let $A$ be a finite subset of $X$. The union of the supports of all the measures in $A$ contains finitely many vectors, and therefore $A$ is a subset of $\mathcal{E}^k Y$ where $Y$ is a finite dimensional subspace of $H$. Since $Y$ is isometric to $\mathbb{R}^{\dim Y}$, $\mathcal{E}^k Y$ embeds into Hilbert space by Theorem \ref{thm:rn} with control functions that do not depend on $\dim Y$. Thus by Theorem \ref{thm:nowak} we obtain the result.
\end{proof}

Note that the above result cannot be derived from the previous results mentioned in the introduction since infinite dimensional Hilbert spaces do not have finite asymptotic dimension or Property A. 

\begin{Remark}
    By the results in \cite{andoni2018snowflake}, the spaces $\mathcal{E}^k \mathbb{R}^3$ for $k\in \mathbb{N}$ are not uniformly coarsely embeddable into Hilbert space -- i.e.~they cannot all be embedded into Hilbert space with the same control functions. We thus cannot remedy the fact that the lower bound in Theorem \ref{thm:rn} depends on the size $k$ of the distributions.  
\end{Remark}

A function $f: X \to Y$ induces a natural map $\mathcal{E}^k f: \mathcal{E}^k X \to \mathcal{E}^k Y$ which is just the pushforward of discrete measures. If $f$ is a coarse embedding, so is $\mathcal{E}^k f$, as the following proposition shows.

\begin{Proposition}\label{prop:Eembed}
    Let $X$ and $Y$ be a metric spaces. If $f: X \to Y$ is a coarse embedding, then so is $\mathcal{E}^k f: \mathcal{E}^k X \to \mathcal{E}^k Y$.
\end{Proposition}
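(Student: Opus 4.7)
The plan is to verify the two control inequalities separately, producing explicit control functions for $\mathcal{E}^k f$ in terms of the given control functions $\rho_-, \rho_+$ of $f$. Throughout I will write $\alpha = \frac{1}{k}\sum_{i=1}^k \delta_{x_i}$ and $\beta = \frac{1}{k}\sum_{i=1}^k \delta_{y_i}$, so that $\mathcal{E}^k f(\alpha) = \frac{1}{k}\sum_{i=1}^k \delta_{f(x_i)}$ and similarly for $\beta$. I will use the formula for $W_1$ on empirical measures via bijections $\pi : \langle k \rangle \to \langle k \rangle$ recalled in Section~\ref{sec:prelim}.

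For the upper bound, let $\pi^\ast$ be an optimal matching for $W_1(\alpha,\beta)$. Since $\mathcal{E}^k f(\alpha)$ and $\mathcal{E}^k f(\beta)$ are coupled by $\pi^\ast$ (not necessarily optimally), we get $W_1(\mathcal{E}^k f(\alpha), \mathcal{E}^k f(\beta)) \leq \frac{1}{k}\sum_i d(f(x_i), f(y_{\pi^\ast(i)})) \leq \frac{1}{k}\sum_i \rho_+(d(x_i, y_{\pi^\ast(i)}))$. Since each individual term $d(x_i, y_{\pi^\ast(i)})$ is bounded above by $\sum_j d(x_j, y_{\pi^\ast(j)}) = k\,W_1(\alpha,\beta)$, and $\rho_+$ is non-decreasing, each $\rho_+(d(x_i, y_{\pi^\ast(i)}))$ is at most $\rho_+(k W_1(\alpha,\beta))$, so the upper control function is $t \mapsto \rho_+(kt)$.

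For the lower bound, let $\pi'$ be an optimal matching for $W_1(\mathcal{E}^k f(\alpha), \mathcal{E}^k f(\beta))$. The key observation is that $\pi'$ was chosen to minimize distances downstream, so it need not be optimal for $W_1(\alpha,\beta)$; nevertheless, comparing against the optimal $\pi^\ast$ upstream gives $\frac{1}{k}\sum_i d(x_i, y_{\pi'(i)}) \geq W_1(\alpha,\beta)$. This forces $\max_i d(x_i, y_{\pi'(i)}) \geq W_1(\alpha,\beta)$. Using that the sum dominates the maximum and that $\rho_-$ is non-decreasing, we obtain
\begin{align*}
k\, W_1(\mathcal{E}^k f(\alpha), \mathcal{E}^k f(\beta)) &= \sum_i d(f(x_i), f(y_{\pi'(i)})) \\
&\geq \max_i d(f(x_i), f(y_{\pi'(i)})) \\
&\geq \rho_-\!\bigl(\max_i d(x_i, y_{\pi'(i)})\bigr) \\
&\geq \rho_-(W_1(\alpha,\beta)),
\end{align*}
so the lower control function is $t \mapsto \tfrac{1}{k}\rho_-(t)$, which still tends to infinity as $t \to \infty$.

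The one subtlety to watch out for is precisely the mismatch between the $X$-optimal and $Y$-optimal matchings in the lower bound; my workaround is the averaging-to-maximum step, which loses a factor of $k$ but does not destroy the coarse equivalence since $\rho_-$ is only required to be proper, not to satisfy any growth rate. Everything else reduces to the definition of a coarse embedding together with monotonicity of $\rho_\pm$; no subtle optimal transport machinery beyond the Birkhoff characterization of $W_1$ for uniformly weighted empirical measures is required.
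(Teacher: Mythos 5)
Your proof is correct and follows essentially the same route as the paper: the same elementary inequalities (each term bounded by the sum for the upper control, average bounded by the maximum for the lower control, plus monotonicity of $\rho_\pm$) yield the same control functions $t \mapsto \tfrac{1}{k}\rho_-(t)$ and $t \mapsto \rho_+(kt)$. The only difference is organizational: you fix the upstream-optimal matching for the upper bound and the downstream-optimal matching for the lower bound, whereas the paper proves the sandwich inequality for every bijection $\pi$ and then takes minima over $\pi$ in all three expressions.
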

\begin{proof}
    Suppose $f: X \to Y$ is a coarse embedding with control functions $\rho_{-},\rho_{+}$, and let $\alpha = \frac{1}{k} \sum_{i=1}^k \delta_{\mathbf{x}^i}$ and $\beta = \frac{1}{k}\sum_{i=1}^k \delta_{\mathbf{y}^i}$ be empirical measures of size $k$ in $X$. For any bijection $\pi: \langle k \rangle \to \langle k \rangle$, we have
    $$
    \frac{1}{k}\sum_{i=1}^k \rho_{-}\left(d(x_i, y_{\pi(i)})\right) \leq \frac{1}{k}\sum_{i=1}^k d(f(x_i), f(y_{\pi(i)})) \leq \frac{1}{k}\sum_{i=1}^k \rho_{+}\left(d(x_i, y_{\pi(i)})\right)
    $$
    Using the fact that both control functions are increasing, we have
    $$
    \frac{1}{k} \rho_{-}\left(\frac{1}{k}\sum_{i=1}^k d(x_i, y_{\pi(i)})\right) \leq \frac{1}{k}\sum_{i=1}^k \rho_{-}\left(d(x_i, y_{\pi(i)})\right) 
    $$
    and
    $$
    \frac{1}{k}\sum_{i=1}^k \rho_{+}\left(d(x_i, y_{\pi(i)})\right) \leq \frac{1}{k} \cdot k \cdot \rho_{+}\left(\sum_{i=1}^k d(x_i, y_{\pi(i)})\right)
    $$
    Thus
    $$
    \frac{1}{k} {\rho}_{-}\left(\frac{1}{k}\sum_{i=1}^k d(x_i, y_{\pi(i)})\right) \leq \frac{1}{k}\sum_{i=1}^k d(f(x_i), f(y_{\pi(i)})) \leq \frac{1}{k} \cdot k \cdot \rho_{+}\left(\sum_{i=1}^k d(x_i, y_{\pi(i)})\right)
    $$
    and taking minima over $\pi$ for each expression gives
    $$
    \frac{1}{k} {\rho}_{-}\left(W_1(\alpha, \beta)\right) \leq W_1(\mathcal{E}^kf(\alpha), \mathcal{E}^kf(\beta)) \leq \rho_{+}\left(k \cdot W_1(\alpha, \beta) \right)
    $$
    which completes the proof.
\end{proof}

We thus obtain the following version of Theorem \ref{thm:main1} from the introduction.

\begin{Theorem}\label{thm:empirical}
    Let $X$ be a metric space and let $k > 0$ be an integer. If $X$ coarsely embeds into Hilbert space, then so does $\mathcal{E}^k X$, the set of all empirical distributions on $X$ of size $k$.
\end{Theorem}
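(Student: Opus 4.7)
The plan is to obtain the embedding as a composition of two maps that have already been constructed in the paper. Start from a coarse embedding $f\colon X \to H$ of $X$ into some Hilbert space $H$, with control functions $\rho_{-},\rho_{+}$. By Proposition \ref{prop:Eembed}, the induced pushforward map $\mathcal{E}^k f\colon \mathcal{E}^k X \to \mathcal{E}^k H$ is a coarse embedding, with control functions $\tfrac{1}{k}\rho_{-}$ and $\rho_{+}$ (up to the constants that appear in the statement of that proposition). By Theorem \ref{thm:hilbert}, $\mathcal{E}^k H$ itself admits a coarse embedding $\psi\colon \mathcal{E}^k H \to H'$ into some Hilbert space $H'$. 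The composition $\psi \circ \mathcal{E}^k f\colon \mathcal{E}^k X \to H'$ will be the required coarse embedding.

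The final step is to verify that the composition of two coarse embeddings is itself a coarse embedding. If $\sigma_{-},\sigma_{+}$ are control functions for $\psi$, then for any $\alpha,\beta \in \mathcal{E}^k X$ one has
\[
\sigma_{-}\!\left(\tfrac{1}{k}\rho_{-}(W_1(\alpha,\beta))\right) \leq \|\psi(\mathcal{E}^k f(\alpha)) - \psi(\mathcal{E}^k f(\beta))\| \leq \sigma_{+}(\rho_{+}(W_1(\alpha,\beta))),
\]
using that $\sigma_{-}$ and $\sigma_{+}$ are non-decreasing. Since $\sigma_{-}$ and $\rho_{-}$ both tend to infinity and are non-decreasing, so does their composition, which furnishes the required lower control function; the upper control function is similarly non-decreasing.

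Because all the substantive analytic work has already been carried out (the sliced Wasserstein bounds of Section \ref{sec:bounds}, the kernel construction via Schoenberg's theorem in Theorem \ref{thm:rn}, and the Nowak-style dimension-independent extension in Theorem \ref{thm:hilbert}), there is no real obstacle remaining at this point. The only minor bookkeeping is to notice that Theorem \ref{thm:hilbert} is genuinely needed rather than Theorem \ref{thm:rn} alone, since the target Hilbert space $H$ of the initial embedding $f\colon X \to H$ may be infinite dimensional, and the coarse embedding of $\mathcal{E}^k H$ into a Hilbert space is exactly the content of Theorem \ref{thm:hilbert}.
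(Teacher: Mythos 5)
Your proposal is correct and follows exactly the same route as the paper: compose the pushforward embedding $\mathcal{E}^k f\colon \mathcal{E}^k X \to \mathcal{E}^k H$ from Proposition \ref{prop:Eembed} with the embedding of $\mathcal{E}^k H$ into Hilbert space from Theorem \ref{thm:hilbert}. The paper states this in two lines, leaving the (standard) verification that a composition of coarse embeddings is a coarse embedding implicit, which you have simply spelled out.
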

\begin{proof}
Since $X$ coarsely embeds into some Hilbert space $H$, $\mathcal{E}^k X$ coarsely embeds into $\mathcal{E}^k H$ by Proposition \ref{prop:Eembed}, which in turn embeds into Hilbert space by Theorem \ref{thm:hilbert}.
\end{proof}

\subsection{Orbit spaces}

We can now present a proof of the main result, Theorem \ref{thm:main2} from the introduction. Recall that if $X$ is a metric space and $G = \{g_1, \ldots, g_k\}$ is a finite group acting on $X$ by isometries, then the induced metric on the orbit space $X/G$ is given by
$$
d([x], [y]) = \inf_{x' \in [x],\ y' \in [y]} d(x', y') = \inf_{g \in G} d(x, g\cdot y)
$$

\begin{proof}[Proof of Theorem \ref{thm:main2}]
Let $x,y \in X$, let $\{x_1, \ldots, x_k\}$ and $\{y_1, \ldots, y_k\}$ be the orbits of $x$ and $y$ respectively, and suppose that $g' \in G$ satisfies $d([x], [y]) = d(x, g' \cdot y)$. Because the action of $G$ is by isometries, we have:
\begin{equation*}
d([x], [y]) = d(x, g'\cdot y) = \frac{1}{k} \sum_{i=1}^k d(g_i \cdot x, g_i g' \cdot y) \geq W_1(\mathcal{L}([x]), \mathcal{L}([y]))    
\end{equation*}
where the distributions $\mathcal{L}([x])$ and $\mathcal{L}([x])$ are the empirical distributions
$$
\mathcal{L}([x]) = \frac{1}{k}\sum_i \delta_{x_i},\ \mathcal{L}([y]) = \frac{1}{k}\sum_i \delta_{y_i}.
$$
On the other hand, $W_1(\mathcal{L}([x]), \mathcal{L}([y])) \geq d([x], [y])$ since $d([x], [y])$ is the smallest distance from the orbit of $x$ to the orbit of $y$. Thus, the orbit space $X/G$ embeds isometrically into $\mathcal{E}^k X$, the space of empirical distributions on $X$ of size $k = |G|$ with the $1$-Wasserstein distance. By Theorem \ref{thm:empirical}, $\mathcal{E}^k X$ coarsely embeds into a Hilbert space.
\end{proof}

\section{Concluding remarks}

\subsection{Coarsely $n$-to-$1$ maps}

The results in~\cite{dydak2016preserving} on finite asymptotic dimension, Asymptotic Property C and Yu's Property A mentioned in the introduction are special cases of more general results regarding so-called coarsely $n$-to-$1$ maps~\cite{miyata2013dimension, dydak2016preserving}. The following analogous question for coarse embeddability remains open.

\begin{Question}
If $X$ admits a coarse embedding into Hilbert space and $f: X \to Y$ is a surjective coarsely $n$-to-$1$ map, does $Y$ admit a coarse embedding into Hilbert space?
\end{Question}

\subsection{Max filters and coorbit embeddings}

Because the proof of Nowak's result (Theorem~\ref{thm:nowak}) is non-constructive, Theorem \ref{thm:main2} does not produce an explicit embedding of $X/G$, even when $X$ is itself a finite dimensional Euclidean space. In the context of invariant machine learning, explicit embeddings are preferable, which has led various authors to construct bi-Lipschitz embeddings $\mathbb{R}^n/G \to \mathbb{R}^m$ where $G$ is a finite group acting by unitaries. We briefly discuss two here, which are closely related to the sliced Wasserstein distance: the max filter bank~\cite{cahill2024group} and the coorbit embedding~\cite{balan2023}. 

With $G$ acting as above, fix a ``window'' vector $\mathbf{w} \in \mathbb{R}^n$ and for a vector $\mathbf{x} \in \mathbb{R}^n$, let $\Phi_{\mathbf{w}}(\mathbf{x})$ be the vector in $\mathbb{R}^{|G|}$ whose coordinates are $(\langle \mathbf{w}, g \cdot \mathbf{x} \rangle)_{g \in G}$ in ascending order. Picking $p$ window vectors $\mathbf{w}_1,\ldots,\mathbf{w}_p$ gives rise to the \emph{coorbit embedding} $\mathbb{R}^n/G \to \mathbb{R}^{|G|p}$ given by
$$
\Phi(\mathbf{x}) = (\Phi_{\mathbf{w}_1}(\mathbf{x}), \ldots \Phi_{\mathbf{w}_p}(\mathbf{x})).
$$
In practice, one may want to restrict to only a subset of the coordinates of $\Phi(\mathbf{x})$; restricting to just the maximum value of each $\Phi_{\mathbf{w}}(\mathbf{x})$ gives a \emph{max filter bank}. There is a relation with the $1$-Wasserstein distance given by
$$
||\Phi_{\mathbf{w}}(\mathbf{x}) - \Phi_{\mathbf{w}}(\mathbf{y})||_1 = {|G|} \cdot W_1(\mathbf{w}^\ast_{\#}\alpha, \mathbf{w}^\ast_{\#}\beta) 
$$
where $\alpha$ and $\beta$ are the empirical distributions corresponding to the orbits of $\mathbf{x}$ and $\mathbf{y}$ respectively, and $\mathbf{w}^\ast$ is the projection onto $\mathbf{w}$. Thus, the sliced Wasserstein distance is proportional to an integral of coorbit distances as $\mathbf{w}$ varies over all unit vectors. The authors of \cite{cahill2024group} and \cite{balan2023} are able to prove a bi-Lipschitz property for certain finite sets of window vectors (possibly chosen randomly). We note that the bounds they obtain are either not explicit or depend on the dimension $n$ (unlike Theorem~\ref{thm:rn}), and so are not enough to obtain our main result.

\bibliographystyle{plain}
\bibliography{embedbib}

\end{document}